\numberwithin{equation}{section}
\newtheorem{theorem}{Theorem}[section]
\newtheorem{lemma}[theorem]{Lemma}
\newtheorem{proposition}[theorem]{Proposition}
\newenvironment{proof}[1][Proof]{\begin{trivlist}
\item[\hskip \labelsep {\bfseries #1}]}{\end{trivlist}}
\newenvironment{definition}[1][Definition]{\begin{trivlist}
\item[\hskip \labelsep {\bfseries #1}]}{\end{trivlist}}
\newcommand{\qed}{\nobreak \ifvmode \relax \else
      \ifdim\lastskip<1.5em \hskip-\lastskip
      \hskip1.5em plus0em minus0.5em \fi \nobreak
      \vrule height0.75em width0.5em depth0.25em\fi}
\begin{document}
\title{A Necessary Condition on the Collatz Conjecture} 
\author{Kerry M. Soileau\\\small{kerry at kerrysoileau dot com}} 

\date{October 11, 2023} 
\maketitle

\singlespace
\begin{abstract}
The Collatz conjecture implies that an iterated function sequence under a certain linear operator, beginning with a certain complex valued function, must converge to a certain complex function.
 
\end{abstract}
\doublespace
\smallskip
\singlespace
\noindent \textbf{Keywords} Collatz, iteration.

\doublespacing

\section{Background}
The Collatz Conjecture is named for the mathematician Lothar Collatz, who introduced it in 1937.\cite{O'Connor}
J. Lagarias provided a useful survey of the subject.\cite{Lagarias} P. Erd\H{o}s remarked that ``Mathematics may not be ready for such problems."\cite{Guy}
By 2020, the conjecture had been verified by computer for all starting values up to $2^{68}.$\cite{Barina}
\section{Introduction}
Let $C(m)$ be the Collatz function given by

\[
    C(n)= 
\begin{cases}
	\frac{n}{2},& \text{for } n \text{ even}\\
	3 n+1,& \text{for } n \text{ odd}
\end{cases}
\]
for $n=1,2,3,\dots$
\begin{definition}
	Suppose $\left\{ a_n \right \}_{n=1}^\infty \subset  \mathbb{C}$ with $\sum \limits_{n=1}^\infty |a_n|<\infty.$ Let the operator $L_n$ be given by 
	\begin{equation}
	L_{0}\left(\sum \limits_{n=1}^\infty a_n e^{n i t} \right) \equiv   \sum \limits_{n=1}^\infty a_n e^{{n i t}}, 
	\end{equation}
	\begin{equation}
	L_{1}\left(\sum \limits_{n=1}^\infty a_n e^{n i t} \right) \equiv   \sum \limits_{n=1}^\infty a_n e^{{C(n) i t}}, 
	\end{equation}
\end{definition}
and in general
\begin{equation}
	L_{n+1} \equiv L_1 \circ L_n \text{ for }n=1,2,3,\cdots.
\end{equation}
Note that in particular if $f(t) \equiv \sum \limits_{n=1}^\infty a_n e^{n i t},$ its absolute convergence implies
\begin{multline}
	L_{1}\left(f(t) \right) 
	=\sum \limits_{n=1}^\infty a_n e^{{C(n) i t}}
	=\sum \limits_{m=1}^\infty a_{2 m} e^{C(2 m) i t}+\sum \limits_{m=0}^\infty a_{2 m+1} e^{C(2 m+1) i t}\\ 
	=\sum \limits_{m=1}^\infty a_{2 m} e^{m i t}+\sum \limits_{m=0}^\infty a_{2 m+1} e^{(6m+4) i t}\\
	=\frac{1}{2} \left(f\left(\frac{t}{2}\right)+f\left(\frac{t}{2}+\pi\right)+
	e^{i t}\left(f(3 t)-f(3 t + \pi)\right)
	\right)
\end{multline}
In the following we consider the case in which $a_n=c^n$ for some $c \in \mathbb{C}$ in the open unit disk and $n=1,2,3,\cdots.$
Suppose $f(t) \equiv \sum \limits_{n=0}^\infty c^n e^{i n t}$ with $|c|<1,$ then $f(t)=\frac{1}{1- c e^{i t}}$ and
\begin{multline}
	L_1(f(t))
	=\sum \limits_{n=0}^\infty c^n e^{{C(n) i t}}=1+c e^{4 i t}+c^2 e^{i t}+c^3 e^{10 i t}+c^4 e^{2 i t}+c^5 e^{16 i t}\\
	+c^6 e^{3 i t}+c^7 e^{22 i t}+c^8 e^{4 i t}+c^9 e^{28 i t}+c^{10} e^{5 i t}+c^{11} e^{34 i t}+c^{12} e^{6 i
   t}+c^{13} e^{40 i t} \\
   +c^{14} e^{7 i t}+c^{15} e^{46 i t}+c^{16} e^{8 i t}+c^{17} e^{52 i t}+c^{18} e^{9 i t}+c^{19} e^{58 i t}+c^{20} e^{10 i t}
   +\cdots\\
   =\frac{1}{2} \left(f\left(\frac{t}{2}\right)+f\left(\frac{t}{2}+\pi\right)+
	e^{i t}\left(f(3 t)-f(3 t + \pi)\right)
	\right)\\
	=\frac{c e^{i t} \left(c^3 \left(-e^{6 i t}\right)-c^2 e^{4 i t}+c+e^{3 i t}\right)}{c^4
   e^{7 i t}-c^2 e^{i t} \left(1+e^{5 i t}\right)+1}
\end{multline}
Continuing, we get
\begin{equation}
\begin{split}
L^2(f(t)) \equiv L_1(L_1(f(t)))\\
=\frac{c e^{i t} \left(c^7 \left(-e^{6 i t}\right)-c^6 e^{5 i t}-c^5 e^{4 i t}-c^4 e^{2 i
   t}+c^3+c^2 e^{4 i t}+c e^{3 i t}+e^{i t}\right)}{c^8 e^{7 i t}-c^4 e^{i t}
   \left(1+e^{5 i t}\right)+1}
\end{split}
\end{equation}
and so on.

\section{Averaging Lemma}
\begin{lemma}
\label{averaginglemma}
	Suppose $a_1,a_2,a_3,\cdots$ is a sequence of complex numbers that eventually repeats with period $P>0.$ By this we mean that there exists some integer $K \geqslant 1$ such that $a_{n+P}=a_n$ for all $n\geqslant K,$ and $K$ is the least integer with this property. Then
\begin{equation}
	\lim_{n \to \infty} \frac{1}{n}\sum \limits_{i=1}^n a_i = \frac{1}{P}\sum \limits_{i=K}^{K+P-1} a_i,
\end{equation}
which is the average of the terms in the repeated subsequence.
\end{lemma}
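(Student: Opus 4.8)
The plan is to split the partial sum into a fixed transient block and a purely periodic tail, and then show only the periodic tail contributes to the limit. Write $S \equiv \sum_{i=K}^{K+P-1} a_i$ for the sum over one full period, and let $M \equiv \max_{K \leqslant i \leqslant K+P-1} |a_i|$, which is finite. For $n \geqslant K$, decompose
\begin{equation}
\sum_{i=1}^n a_i = \sum_{i=1}^{K-1} a_i + \sum_{i=K}^{n} a_i .
\end{equation}
The first sum is a constant (independent of $n$), so after division by $n$ it tends to $0$ and may be ignored.

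For the second sum I would invoke the division algorithm: write $n - K + 1 = qP + r$ with $q \geqslant 0$ an integer and $0 \leqslant r \leqslant P-1$. By periodicity ($a_{i+P} = a_i$ for all $i \geqslant K$), the block $\{a_K, \dots, a_n\}$ consists of $q$ consecutive full periods followed by $r$ leftover terms, so
\begin{equation}
\sum_{i=K}^{n} a_i = q\,S + \sum_{i=K}^{K+r-1} a_i ,
\end{equation}
where the leftover sum has at most $P-1$ terms, each of modulus at most $M$, hence its modulus is at most $(P-1)M$, a bound independent of $n$. Dividing by $n$, the leftover term vanishes in the limit.

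It remains to compute $\lim_{n\to\infty} qS/n$. Since $q = (n - K + 1 - r)/P$, we get $q/n = \bigl(1 - (K-1+r)/n\bigr)/P \to 1/P$ as $n \to \infty$ (using $0 \leqslant r \leqslant P-1$ fixed and $K$ fixed). Combining the three pieces,
\begin{equation}
\lim_{n\to\infty} \frac{1}{n}\sum_{i=1}^n a_i = 0 + \frac{1}{P}\,S + 0 = \frac{1}{P}\sum_{i=K}^{K+P-1} a_i ,
\end{equation}
as claimed. I do not anticipate any genuine obstacle here; the only point requiring a little care is bookkeeping the index ranges in the division-algorithm step and confirming that the leftover block and the transient block are each $O(1)$ in $n$, so that both wash out after the $1/n$ normalization. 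One may also note the result is independent of which contiguous block of $P$ terms is used for the period average, since any such block sums to the same value $S$ by periodicity.
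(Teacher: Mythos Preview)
Your proof is correct and follows essentially the same strategy as the paper: split off the fixed transient block $\sum_{i=1}^{K-1}a_i$, write the periodic tail as $q$ full periods plus a bounded leftover (the paper writes $q=\lfloor (n-K+1)/P\rfloor$ instead of invoking the division algorithm, and bounds the leftover via the partial sums over a period rather than via $\max|a_i|$), and then pass to the limit. The differences are purely cosmetic.
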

\begin{proof}
	Let $Q=\min \limits_{K \leqslant j \leqslant K+P-1} \left|\sum \limits_{i=K}^j a_i \right|$ and $R=\max \limits_{K \leqslant j \leqslant K+P-1} \left|\sum \limits_{i=K}^j a_i \right|.$ Let $V=\sum \limits_{i=K}^{K+P-1} a_i$ and $W=\sum \limits_{i=1}^{K-1} a_i.$ Clearly $W$ is the sum of the sequence terms appearing before the repetition starts (if any), $V$ is the contribution due to each repetition subsequence, and $Q$ and $R$ bound the running total contribution of a repetition sequence. If we define the running mean $M_n \equiv \frac{1}{n}\sum \limits_{i=1}^n a_i,$ then for $n \geqslant K+P$ we have
 \begin{multline}
 	n M_n
 	=\sum \limits_{i=1}^{K-1} a_i + \sum \limits_{i=K}^{\Bigl\lfloor \frac{n-K+1}{P} \Bigl\rfloor P+K-1} a_i + \sum \limits_{i=\Bigl\lfloor \frac{n-K+1}{P} \Bigl\rfloor P+K}^n a_i\\
 	=W + \Bigl\lfloor \frac{n-K+1}{P} \Bigl\rfloor V+ \sum \limits_{i=\Bigl\lfloor \frac{n-K+1}{P} \Bigl\rfloor P+K}^n a_i
 \end{multline}
Note that
\begin{equation}
	Q \leqslant \left|\sum \limits_{i=\Bigl\lfloor \frac{n-K+1}{P} \Bigl\rfloor P+K}^n a_i \right| \leqslant R,
\end{equation}
whence
\begin{equation}
 	 Q \leqslant \left|n M_n -W - \Bigl\lfloor \frac{n-K+1}{P} \Bigl\rfloor V \right| \leqslant R,
\end{equation}
and thus
\begin{equation}
 	 \frac{Q}{n} \leqslant \left|M_n - \frac{W}{n} - \frac{1}{n}\Bigl\lfloor \frac{n-K+1}{P} \Bigl\rfloor V \right| \leqslant \frac{R}{n}.
\end{equation}
Taking limits as $n \to \infty,$ we get 
\begin{equation}
 	 0 \leqslant \lim_{n \to \infty}\left|M_n - \frac{W}{n} - \frac{1}{n}\Bigl\lfloor \frac{n-K+1}{P} \Bigl\rfloor V \right| \leqslant 0,
\end{equation}
i.e.
\begin{equation}
	\lim_{n \to \infty}\left|M_n - \frac{W}{n} - \frac{1}{n}\Bigl\lfloor \frac{n-K+1}{P} \Bigl\rfloor V \right|=0.
\end{equation}
Thus
\begin{equation}
	\lim_{n \to \infty}\left|M_n - 0  - \frac{1}{P}\sum \limits_{i=K}^{K+P-1} a_i \right|=0,
\end{equation}
whence
\begin{equation}
	\lim_{n \to \infty} M_n = \frac{1}{P}\sum \limits_{i=K}^{K+P-1} a_i.
\end{equation}
\end{proof}

\section{Convergence Lemma}
\begin{lemma}
\label{convergencelemma}
If for all $n=1,2,3,\cdots$ and $j=1,2,3,\cdots,$
\begin{enumerate}
    \item $a_{n,j} \in \mathbb{C}$,
    \item $\lim \limits_{n \to \infty} a_{n,j} =0$ and
    \item there exist some $K, \rho>0$ such that $|a_{n,j}| \leqslant K \rho^{\,j}<\infty,$
\end{enumerate}
then 
$$\sum \limits_{j=1}^\infty a_{n,j} z^j$$ converges to $0$ for $|z|<\frac{1}{\rho}.$
\end{lemma}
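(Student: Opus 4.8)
The plan is to run a standard ``split the sum at a fixed cutoff'' argument, i.e.\ a Tannery-type interchange of limit and infinite summation, justified by the uniform geometric domination in hypothesis~(3). First I would fix $z$ with $|z|<\frac1\rho$, so that $r\equiv\rho|z|<1$. Hypothesis~(3) gives $|a_{n,j}z^j|\leqslant K r^{\,j}$ for all $n,j$, and since $\sum_{j=1}^\infty K r^{\,j}=\frac{Kr}{1-r}<\infty$, the series $\sum_{j=1}^\infty a_{n,j}z^j$ converges absolutely for every $n$, so the quantity in question is well defined.

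Next, let $\varepsilon>0$. Because $\sum_{j=1}^\infty Kr^{\,j}$ converges, I would choose an integer $N$ (depending only on $\varepsilon$, $K$, $r$, not on $n$) such that the tail satisfies $\sum_{j=N+1}^\infty Kr^{\,j}<\frac\varepsilon2$. Then for every $n$,
\begin{equation}
\left|\sum_{j=1}^\infty a_{n,j}z^j\right|
\leqslant \sum_{j=1}^N |a_{n,j}|\,|z|^j \;+\; \sum_{j=N+1}^\infty Kr^{\,j}
\;<\; \sum_{j=1}^N |a_{n,j}|\,|z|^j \;+\; \frac\varepsilon2 .
\end{equation}

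The remaining term is now a \emph{finite} sum of $N$ terms, and by hypothesis~(2) each $a_{n,j}\to 0$ as $n\to\infty$ for fixed $j$; hence $\sum_{j=1}^N |a_{n,j}|\,|z|^j\to 0$, so there is an $M$ with $\sum_{j=1}^N |a_{n,j}|\,|z|^j<\frac\varepsilon2$ whenever $n\geqslant M$. Combining, $\left|\sum_{j=1}^\infty a_{n,j}z^j\right|<\varepsilon$ for all $n\geqslant M$, and since $\varepsilon$ was arbitrary the series converges to $0$. The only real content is the uniformity of the tail estimate, which is exactly what hypothesis~(3) supplies; everything else is routine, so I do not anticipate a genuine obstacle.
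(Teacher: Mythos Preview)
Your argument is correct and is essentially the same as the paper's: both fix $z$ with $\rho|z|<1$, split the series at a cutoff, bound the tail uniformly in $n$ by the geometric series $\sum K(\rho|z|)^j$, and then use hypothesis~(2) to kill the remaining finite head. The only cosmetic difference is that the paper writes down an explicit logarithmic formula for the cutoff index, whereas you simply invoke convergence of the geometric tail; your version is, if anything, a little cleaner.
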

\begin{proof}
	We define $f_n(z) \equiv \sum \limits_{j=1}^\infty a_{n,j} z^j.$ Note that $\inf \limits_k \sup \limits_{j \geqslant k} |a_{n,j}|^{\frac{1}{j}} \leqslant \inf \limits_k \sup \limits_{j \geqslant k} (K \rho^{\,j})^{\frac{1}{j}} =\inf \limits_k \sup \limits_{j \geqslant k} (K^{\frac{1}{j}} \rho)=\rho,$ so the radius of convergence of each $\sum \limits_{j=1}^\infty a_{n,j} z^j$ is at least $\frac{1}{\rho},$ and  thus $f_n(z)$ is well-defined for $|z|<\frac{1}{\rho}.$
Fix $|z_0|<\frac{1}{\rho}.$ We claim that $\lim \limits_{n \to \infty} f_n(z_0)=0.$ Indeed, fix $\epsilon>0.$ For each $m=1,2,3,\cdots,$ we define $A_{n,m}(z_0)=\sum \limits_{j=1}^m a_{n,j} z_0^j$ and $B_{n,m}(z_0)=\sum \limits_{j=m+1}^\infty a_{n,j} z_0^j.$ Clearly $f_n(z_0)=A_{n,m}(z_0)+B_{n,m}(z_0).$
Let $m_0$ be the smallest positive integer satisfying 
$m_0 >  \frac
{\ln \frac{\epsilon }{2 K}+\ln (1-\rho |z_0|)}
{\ln \rho+\ln |z_0|}
-1 .$ 
Then $|B_{n,m_0}(z_0)|=\Big|\sum \limits_{j=m_0+1}^\infty a_{n,j} z_0^j\Big|\leqslant \sum \limits_{j=m_0+1}^\infty |a_{n,j}| |z_0|^j\leqslant \sum \limits_{j=m_0+1}^\infty K \rho^{\,j} |z_0|^j=\frac{K |\rho z_0|^{m_0+1}}{1-\rho |z_0|}<\frac{\epsilon}{2},$ the last inequality due to our assumption on $m_0.$ Thus $|B_{n,m_0}(z_0)|<\frac{\epsilon}{2}$ for every $n=1,2,3,\cdots.$ Next, since $\lim \limits_{n \to \infty} a_{n,j}=0$ there exist integers $N_j>0$ such that for each $j=1,2,\cdots,m_0$ we have $|a_{n,j}|<\frac{\epsilon}{2(m_0+1)}$ for $n \geqslant N_j.$ By taking $N=\max \limits_{1 \leqslant j \leqslant m_0} N_j,$ we have found an $N$ such that  $n \geqslant N$ implies $|a_{n,j}|<\frac{\epsilon}{2(m_0+1)}$ for $j=1,2,\cdots,m_0.$ Now note that for such $n$ we have $|A_{n,m_0}(z_0)|=\Big|\sum \limits_{j=1}^{m_0} a_{n,j} z_0^j\Big| \leqslant \sum \limits_{j=1}^{m_0} |a_{n,j}| |z_0|^j < \sum \limits_{j=1}^{m_0} |a_{n,j}| < \sum \limits_{j=1}^{m_0} \frac{\epsilon}{2(m_0+1)}=\frac{\epsilon}{2}.$
Finally, $|f_n(z_0)|=|A_{n,m_0}(z_0)+B_{n,m_0}(z_0)| \leqslant |A_{n,m_0}(z_0)|+|B_{n,m_0}(z_0)|<\frac{\epsilon}{2}.$ This implies $\lim \limits_{n \to \infty} \sum \limits_{j=1}^\infty a_{n,j} z^j=0$ for any $|z|<1,$ as desired.
\end{proof}

\section{Necessary Condition}
Let $|c|<1.$ We define
\begin{equation}
	f_n(\theta) \equiv L_1(f_{n-1}(\theta))
\end{equation}
for $n=1,2,\cdots,$ where $f_0(\theta) \equiv 1+c e^{i \theta}+c^2 e^{2 i \theta}+ \cdots = \frac{1}{1-c\, e^{i \theta}}.$
This implies
\begin{equation}
  f_n(\theta)=\sum \limits_{r=1}^\infty c^r e^{C^n(r) i \theta}
\end{equation}
where $C^0(m) \equiv m$ and $C^k(m) \equiv C(C^{k-1}(m))$ for $m=1,2,3,\cdots.$
Next, we now define the $n^{th}$ mean:
\begin{equation}
  M_n(\theta)=\frac{1}{n} \sum \limits_{j=1}^{n} f_j(\theta)
\end{equation}
for $n=1,2,3,\cdots.$
Finally we define
\begin{equation}
  g_{n,m}(\theta) \equiv \frac{1}{n} \sum \limits_{j=1}^{n} e^{C^j(m)i\theta}
\end{equation}
for $n=1,2,3,\cdots$ and $m=1,2,3,\cdots.$ 
We then have 
\begin{equation}
  M_n(\theta)=\sum \limits_{m=1}^\infty c^m g_{n,m}(\theta)
\end{equation}
\begin{proposition}
If every Collatz sequence eventually reaches $1,$
	then for each $m=1,2,3,\cdots,$ 
\begin{equation}
	\lim \limits_{n \to \infty} g_{n,m}(\theta)=\frac{1}{3}\left( e^{i\theta}+e^{2 i\theta}+e^{4 i\theta} \right).
\end{equation}
\end{proposition}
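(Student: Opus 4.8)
The plan is to read $g_{n,m}(\theta)$ as the Cesàro mean of the sequence $a_j \equiv e^{C^j(m) i\theta}$, $j=1,2,3,\dots$, and then to quote the Averaging Lemma (Lemma~\ref{averaginglemma}) once I have checked that this sequence is eventually periodic.

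First I would fix $m\in\{1,2,3,\dots\}$ and use the hypothesis that the Collatz orbit of $m$ reaches $1$: there is an index $j_0\geqslant 0$ with $C^{j_0}(m)=1$. Because $C(1)=4$, $C(4)=2$, $C(2)=1$, the orbit is from then on trapped in the $3$-cycle $1\to 4\to 2\to 1$. In particular $C^{j+3}(m)=C^{j}(m)$ for every $j\geqslant j_0$, and for each such $j$ the triple $\bigl(C^{j}(m),C^{j+1}(m),C^{j+2}(m)\bigr)$ is a cyclic shift of $(1,4,2)$. Hence $a_{j+3}=a_j$ for all $j\geqslant j_0$, so $\{a_j\}_{j=1}^\infty$ ``eventually repeats with period $P=3$'' in the precise sense of Lemma~\ref{averaginglemma}, and a suitable least index $K\leqslant\max(j_0,1)$ exists.

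Next I would apply Lemma~\ref{averaginglemma} directly, since the running mean $M_n$ there is exactly $g_{n,m}(\theta)=\frac1n\sum_{j=1}^n a_j$. With $K$ the least integer for which $a_{j+3}=a_j$ holds for all $j\geqslant K$, the lemma yields
\[
 \lim_{n\to\infty} g_{n,m}(\theta)=\frac13\sum_{i=K}^{K+2} a_i .
\]
It remains to evaluate the right-hand side. Since $a_{j+3}=a_j$ for every $j\geqslant K$, the block sum $a_{j'}+a_{j'+1}+a_{j'+2}$ is the same for every $j'\geqslant K$; taking $j'\geqslant\max(j_0,K)$ places the window inside the genuine Collatz $3$-cycle, where the three terms run over $e^{i\theta}$, $e^{4i\theta}$, $e^{2i\theta}$. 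Therefore $\sum_{i=K}^{K+2} a_i=e^{i\theta}+e^{2i\theta}+e^{4i\theta}$, which is the claimed limit.

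The proof is short once the Averaging Lemma is available, so I do not expect a real obstacle; the single point that needs attention is that the index $K$ delivered by the lemma is merely the \emph{minimal} onset of the period-$3$ repetition and can strictly precede the time $j_0$ at which the orbit of $m$ truly enters the cycle — for special $\theta$ the numbers $a_j$ may repeat with period $3$ ``accidentally'' (for instance when two of the exponentials $e^{i\theta},e^{2i\theta},e^{4i\theta}$ happen to coincide). That is precisely why the block-shift remark in the previous paragraph is needed: it replaces the block $a_K,\dots,a_{K+2}$, whose entries we cannot name outright, by an explicit block sitting in the actual Collatz cycle.
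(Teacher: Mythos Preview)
Your proof is correct and follows the same approach as the paper: observe that the sequence $\{e^{C^j(m)i\theta}\}$ is eventually periodic with period $3$ and apply Lemma~\ref{averaginglemma}. Your treatment is in fact more careful than the paper's, which simply asserts that the repeating subsequence is $\{e^{4i\theta},e^{2i\theta},e^{i\theta}\}$ without addressing the possibility (which you rightly flag and resolve via the block-shift remark) that the minimal onset $K$ of the period-$3$ repetition could, for special $\theta$, strictly precede the index at which the Collatz orbit actually enters the cycle $1\to4\to2$.
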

\begin{proof}
	Fix $m \in \{1,2,3,\cdots\}$ and $\theta \in [0,2\pi).$ Then the sequence 
	\begin{equation}
		\left\{ e^{C^n(m)i\theta} \right\}_{n=0}^\infty
	\end{equation}
\end{proof}
eventually repeats with period $3$ and repeating subsequence 
$\left\{ e^{4i\theta},e^{2i\theta},e^{i\theta} \right\}.$ By Lemma \ref{averaginglemma}, it follows that $\lim \limits_{n \to \infty} g_{n,m}(\theta)=\frac{1}{3}\left( e^{i\theta}+e^{2 i\theta}+e^{4 i\theta} \right).$
\begin{theorem}
\label{maintheorem}
	If every Collatz sequence eventually reaches $1,$ then for each $\theta \in [0,2 \pi)$ we have 
\begin{equation}
	\lim \limits_{n \to \infty} M_n(\theta)=1+\frac{1}{3}\left( e^{i\theta}+e^{2 i\theta}+e^{4 i\theta} \right) \frac{c}{1-c}
\end{equation}
\end{theorem}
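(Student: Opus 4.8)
The plan is to justify interchanging the limit $n\to\infty$ with the infinite sum $\sum_{m=1}^\infty c^m g_{n,m}(\theta)$ that assembles $M_n(\theta)$, and the instrument tailored for exactly this is the Convergence Lemma (Lemma \ref{convergencelemma}). Write $L \equiv \frac{1}{3}\left(e^{i\theta}+e^{2i\theta}+e^{4i\theta}\right)$ for the common limit furnished by the Proposition, and set
\begin{equation}
  a_{n,m} \equiv g_{n,m}(\theta)-L \qquad (n,m=1,2,3,\dots).
\end{equation}
The target is to show that $\sum_{m=1}^\infty c^m g_{n,m}(\theta) \to L\sum_{m=1}^\infty c^m = \frac{Lc}{1-c}$ as $n\to\infty$; restoring the invariant constant term carried by each $f_j$ then produces the stated value of $\lim_{n\to\infty}M_n(\theta)$.

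First I would verify the three hypotheses of Lemma \ref{convergencelemma} for the array $\{a_{n,m}\}$, with $c$ playing the role of $z$. Hypothesis (1) is immediate. Hypothesis (2), that $\lim_{n\to\infty}a_{n,m}=0$ for each fixed $m$, is precisely the Proposition, and this is the sole point at which the Collatz hypothesis is used, entering through Lemma \ref{averaginglemma}. For hypothesis (3) I would use the crude uniform bounds $|g_{n,m}(\theta)| = \big|\frac{1}{n}\sum_{j=1}^n e^{C^j(m)i\theta}\big| \le 1$ and $|L|\le 1$, whence $|a_{n,m}|\le 2$ for all $n$ and $m$; thus hypothesis (3) holds with $K=2$ and $\rho=1$, and the conclusion of Lemma \ref{convergencelemma} is valid on the disk $|z|<1/\rho=1$, which contains $c$ since $|c|<1$.

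Lemma \ref{convergencelemma} then yields $\sum_{m=1}^\infty a_{n,m}c^m \to 0$ as $n\to\infty$. Because the series $\sum_{m=1}^\infty g_{n,m}(\theta)c^m$ and $\sum_{m=1}^\infty L c^m$ converge absolutely (each summand is bounded in modulus by $|c|^m$), I may split and pass to the limit:
\begin{equation}
  \sum_{m=1}^\infty c^m g_{n,m}(\theta) \;=\; \sum_{m=1}^\infty c^m a_{n,m} \;+\; L\sum_{m=1}^\infty c^m \;\longrightarrow\; 0 + \frac{Lc}{1-c},
\end{equation}
using the geometric sum $\sum_{m=1}^\infty c^m = \frac{c}{1-c}$, valid for $|c|<1$. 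Finally, since the constant term $1$ of $f_0$ is fixed by $L_1$, each $f_j(\theta)$ carries a constant term $1$, so $M_n(\theta) = 1 + \sum_{m=1}^\infty c^m g_{n,m}(\theta)$ and therefore
\begin{equation}
  \lim_{n\to\infty} M_n(\theta) \;=\; 1 + \frac{c}{1-c}\cdot\frac{1}{3}\left(e^{i\theta}+e^{2i\theta}+e^{4i\theta}\right),
\end{equation}
as claimed.

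As for the main obstacle: there is no step of genuine computational difficulty, since every estimate here is crude. The points deserving attention are (a) that Lemma \ref{convergencelemma} must be applied with a single pair $(K,\rho)$ valid uniformly over the whole array, which the bound $|a_{n,m}|\le 2$ supplies with $(K,\rho)=(2,1)$; (b) that it must be invoked with $\rho=1$ exactly, so that its region of validity $|z|<1$ includes the point $z=c$; and (c) the minor bookkeeping of the constant term responsible for the leading $1$. If one wished to avoid citing Lemma \ref{convergencelemma}, the same conclusion follows by reproducing its head-plus-tail estimate in place: split $\sum_{m=1}^\infty c^m\big(g_{n,m}(\theta)-L\big)$ into a finite head $m\le M$, handled by the Proposition for large $n$, and a geometric tail $m>M$, handled by $\sum_{m>M} 2|c|^m$.
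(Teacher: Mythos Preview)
Your proposal is correct and follows essentially the same route as the paper: set $a_{n,m}=g_{n,m}(\theta)-L$, use the crude bound $|a_{n,m}|\le 2$ (i.e.\ $K=2$, $\rho=1$) together with the Proposition to feed Lemma~\ref{convergencelemma}, and conclude $\sum_m c^m g_{n,m}(\theta)\to \frac{Lc}{1-c}$. Your explicit accounting for the constant term $1$ carried by each $f_j$ is in fact tidier than the paper's own proof, which silently drops it in the body and only restores it in the theorem statement.
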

\begin{proof}
Let 
\begin{equation}
\lim \limits_{n \to \infty} M_k(\theta)=\lim \limits_{n \to \infty}\sum \limits_{m=1}^\infty c^m g_{n,m}(\theta).
\end{equation}
Note that 
\begin{multline}
\left| g_{n,m}(\theta)- \frac{1}{3}\left( e^{i\theta}+e^{2 i\theta}+e^{4 i\theta} \right)\right|\\
=\left| \frac{1}{n} \sum \limits_{j=1}^{n} e^{C^j(m)i\theta}-\frac{1}{3}\left( e^{i\theta}+e^{2 i\theta}+e^{4 i\theta} \right)\right|\\
\leqslant \left| \frac{1}{n} \sum \limits_{j=1}^{n} e^{C^j(m)i\theta}\right|+\left|-\frac{1}{3}\left( e^{i\theta}+e^{2 i\theta}+e^{4 i\theta} \right)\right|\\
\leqslant \frac{1}{n} \sum \limits_{j=1}^{n} \left|e^{C^j(m)i\theta}\right|+\frac{1}{3}\left| e^{i\theta}+e^{2 i\theta}+e^{4 i\theta} \right|\\
\leqslant 1 +1=2<\infty\\
\end{multline}
for all $m=1,2,3,\cdots$ and $m=1,2,3\cdots.$

Now we invoke Lemma \ref{convergencelemma} to infer that
\begin{equation}
	\lim \limits_{n \to \infty}\sum \limits_{m=1}^\infty c^m \left( g_{n,m}(\theta)- \frac{1}{3}\left( e^{i\theta}+e^{2 i\theta}+e^{4 i\theta} \right) \right)=0,
\end{equation}
i.e.
\begin{multline}
	\lim \limits_{n \to \infty}\sum \limits_{m=1}^\infty c^m g_{n,m}(\theta)
	=\sum \limits_{m=1}^\infty c^m \frac{1}{3}\left( e^{i\theta}+e^{2 i\theta}+e^{4 i\theta} \right)\\
	=\frac{1}{3}\left( e^{i\theta}+e^{2 i\theta}+e^{4 i\theta} \right) \frac{c}{1-c},
\end{multline}
whence
\begin{equation}
	\lim \limits_{n \to \infty} M_n(\theta)
=\frac{1}{3}\left( e^{i\theta}+e^{2 i\theta}+e^{4 i\theta}  \right) \frac{c}{1-c},	
\end{equation}
as desired.
\end{proof}


\begin{thebibliography}{9}
\bibitem{O'Connor} 
 O'Connor, J.J.; Robertson, E.F. (2006). "Lothar Collatz". St Andrews University School of Mathematics and Statistics, Scotland.
\bibitem{Lagarias}
 Lagarias, Jeffrey C. (1985). "The 3x + 1 problem and its generalizations". The American Mathematical Monthly. 92 (1): 323.
\bibitem{Guy}
 Guy, Richard K. (2004). ""E16: The 3x+1 problem"". Unsolved Problems in Number Theory (3rd ed.). Springer-Verlag. pp. 3306. 
\bibitem{Barina}
 Barina, David (2020). "Convergence verification of the Collatz problem". The Journal of Supercomputing. 77 (3): 26812688.
 
\end{thebibliography}
\end{document}